\documentclass[11pt]{amsart}

\newtheorem{theorem}{Theorem}[section]
\newtheorem{lemma}[theorem]{Lemma}

\newtheorem{proposition}[theorem]{Proposition}

\theoremstyle{definition}

\usepackage{tikz-cd}
\usepackage{mathrsfs}
\usepackage[all,cmtip]{xy}
\usepackage{amsthm}

\newcommand{\C}{\mathbb{C}}

\newcommand{\D}{\mathbb{D}}

\newcommand{\sD}{\mathscr{D}}
\newcommand{\sDk}{\mathscr{D}_K}
\newcommand{\uz}{\underline{z}}
\newcommand{\uzi}{\underline{z}_I}
\newcommand{\uw}{\underline{w}}
\newcommand{\sDo}{\mathscr{D}_\Omega}
\newcommand{\Cn}{\mathbb{C}^n}
\newcommand{\Tw}{T_{\underline{w}}}

\newcommand{\co}{c_\Omega ^*}

\theoremstyle{remark}

\newtheorem*{definition*}{\textbf{Definition}}  
\newtheorem*{example*}{\textbf{Example}}  

\numberwithin{equation}{section}

\begin{document}

\title{On interpolation in Carath\'eodory hyperbolic domains}

\author{Anindya Biswas}
\address{Department of Mathematics and Statistics, Masaryk University, Brno}
\address{ORCID: 0000-0002-7805-9446}
\email{anindyab132652@gmail.com}
\thanks{}



\subjclass[2010]{Primary 32E30, Secondary 32F45}

\keywords{Pick body, Carath\'eodory hyperbolic, Carath\'eodory pseudodistance}

\date{}

\dedicatory{}

\begin{abstract}
We study the relation between Pick bodies on Carath\'eodory hyperbolic domains and contractions on finite dimensional Hilbert spaces. We give a condition sufficient to realize Pick bodies on Carath\'eodory hyperbolic domains as a Pick body on the open unit disc.

\end{abstract}

\maketitle

\section{Introduction and statements of the results} 

Suppose that $\Omega$ is a Carath\'eodory hyperbolic domain in the complex space $\C^m$ and $z_1,\ldots, z_n$ are mutually distinct points in $\Omega$. The object of our study is the \textit{Pick body}  (also known as the \textit{interpolation body}) which is given by
\begin{align*}
	\sDo(z_1,\ldots,z_n)=\{(f(z_1),\ldots,f(z_n)):f\in \mathcal{O}(\Omega,\overline{\D})\},
\end{align*}
where $\mathcal{O}(\Omega,\overline{\D})$ is the space of all holomorphic functions from $\Omega$ to the closure of the open unit disc $\D$. We will often abbreviate $n$-tuples $(x_1,\ldots,x_n)$ as $\underline{x}$ when there is no scope of confusion. The study of Pick bodies, was initiated by Cole, Lewis, and Wermer (\cite{CLW1992}, \cite{CLW1993}, \cite{CW1993}, \cite{CW1996}, \cite{CW1997}) to understand the Pick interpolation problem which asks for a necessary and sufficient condition for a given interpolation problem $\Omega\ni z_j\mapsto w_j\in \overline{\D},j=1,\ldots,n,$ to have a solution in $\mathcal{O}(\Omega,\overline{\D})$. When $\Omega=\D$, the problem has a well understood solution: An interpolation problem $\D\ni \lambda_j\mapsto w_j\in \overline{\D}$ has a solution in $\mathcal{O}(\D,\overline{\D})$ if and only if the matrix
\begin{align}\label{PickMatrix}
	\Bigg(\frac{1-w_i\overline{w_j}}{1-\lambda_i\overline{\lambda_j}}\Bigg)_{i,j}
\end{align}
is positive semidefinite. Moreover, the solution is unique if and only if the rank of the matrix (\ref{PickMatrix}) is less than $n$, and in that case the solution a finite Blaschke product whose degree equals the rank (\cite{A-M},	\cite{Nevanlinna1919}, \cite{Pick1916}, for a simpler proof see \cite{Marshall}). Thus 
\begin{align*}
	\sD_\D (\underline{\lambda})=\Big\{(w_1,\ldots,w_n)\in \C^n:\Big(\frac{1-w_i\overline{w_j}}{1-\lambda_i\overline{\lambda_j}}\Big)_{i,j}\geq \mathbf{0}\Big\}.
\end{align*}

Since $\lambda_1,\ldots,\lambda_n$ are mutually distinct, the matrix $\Big(\frac{1}{1-\lambda_i\overline{\lambda_j}}\Big)_{i,j}$ is positive definite and we can find a map $k:\{\lambda_1,\ldots, \lambda_n\}\rightarrow \Cn$ such that $\langle k(\lambda_j),k(\lambda_i)\rangle=\frac{1}{1-\lambda_i\overline{\lambda_j}}$. This gives rise to an inner product $\langle .,.\rangle_{\underline{\lambda}}$ on $\Cn$ which is given by 
\begin{align*}
	\langle (t_1,\ldots,t_n), (s_1,\ldots, s_n)\rangle_{\underline{\lambda}}=\sum_{i,j=1}^{n} t_i\overline{s_j}\langle k(\lambda_i),k(\lambda_j)\rangle.
\end{align*}
For $\underline{w}=(w_1,\ldots,w_n)\in \Cn$, if we define $T_{\underline{w}}(k(\lambda_j))=\overline{w_j}k(\lambda_j)$, then we find that $\sD_\D (\underline{\lambda})$ can be identified with $\{\underline{w}\in \Cn:||T_{\underline{w}}||\leq 1\}$, that is, $\sD_\D (\underline{\lambda})$ is the closed unit ball of $\Cn$, equipped with some norm (in our case an operator norm, see \cite{Sarason67}). A generalization of this set up is the following: Consider a positive definite $n\times n$ matrix $K=(K(i,j))_{1\leq i,j\leq n}$ and define 
\begin{align}
	\sDk=\big\{\underline{w}\in \Cn:\big((1-w_i\overline{w_j}) K(i,j)\big)\geq \mathbf{0}\big\}.
\end{align}
For the case of $\D$, we see that there is a single positive definite matrix $K$ satisfying $\sD_\D (\underline{\lambda})=\sDk$. Sometimes, we express $K$ as a $\C$-valued function defined on $\{z_1,\ldots,z_n\}\times \{z_1,\ldots,z_n\}$ to emphasize the involvement of the points $\{z_1,\ldots,z_n\}$ and in that case we will write $\sDk(\underline{z})$ instead of $\sDk$.

When $\Omega$ is the bidisc $\D^2$, the article \cite{CW1996} explores the question of finding a single matrix $K$ so that $\sD_{\D^2}(\underline{z})$ can be expressed as $\sDk$. In this article, we extend some of those results for any Carath\'eodory hyperbolic domain. Our first result in this direction is the following.
\begin{theorem}\label{PickBodyAsIntersection}
For any Carath\'eodory hyperbolic domain $\Omega$ and $n$ distinct points $z_1,\ldots, z_n$ in $\Omega$, there is a collection $\mathcal{K}_\Omega$ of positive definite $n\times n$ matrices such that 
\begin{align}\label{IntersectionOfKernelBalls}
	\sD_\Omega(\underline{z})=\bigcap_{K\in \mathcal{K}_\Omega} \sD_K.
\end{align}
\end{theorem}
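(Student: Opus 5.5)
The approach is to pull kernels back from the disc along holomorphic maps $\varphi\in\mathcal{O}(\Omega,\D)$ that separate the points. For such a $\varphi$ with $\varphi(z_1),\ldots,\varphi(z_n)$ mutually distinct, set
\begin{align*}
K_\varphi(i,j)=\frac{1}{1-\varphi(z_i)\overline{\varphi(z_j)}}.
\end{align*}
This matrix is positive definite because the Szeg\H{o} kernel matrix at distinct points of $\D$ is. Let $\mathcal{K}_\Omega$ be the set of all such $K_\varphi$. By Carath\'eodory hyperbolicity it is nonempty: bounded holomorphic functions separate points of $\Omega$, so a generic linear combination of finitely many separating functions, scaled into $\D$, separates all $n$ points at once. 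To prove (\ref{IntersectionOfKernelBalls}) I would show the two inclusions separately.

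\emph{Inclusion $\subseteq$.} Take $f\in\mathcal{O}(\Omega,\overline{\D})$ and $K_\varphi\in\mathcal{K}_\Omega$. We need
\begin{align*}
\Big(\frac{1-f(z_i)\overline{f(z_j)}}{1-\varphi(z_i)\overline{\varphi(z_j)}}\Big)_{i,j}\geq\mathbf{0}.
\end{align*}
This is not automatic from the disc Pick theorem, because $f$ need not factor through $\varphi$. So I would enlarge the family rather than prove it for every $\varphi$. Take $\mathcal{K}_\Omega$ to consist of all positive definite $K$ such that $\big((1-f(z_i)\overline{f(z_j)})K(i,j)\big)\geq\mathbf{0}$ for every $f\in\mathcal{O}(\Omega,\overline{\D})$. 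Then $\subseteq$ holds by definition, and the family is nonempty.

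\emph{Nonemptiness.} The natural test kernel is $K_\varphi$ for a separating $\varphi$, but checking it against every $f$ is the same hard issue. Instead I would use a cruder kernel, $K=\epsilon I+J$, where $J$ is the all-ones matrix and $\epsilon>0$ is small. Note that $(1-w_i\overline{w_j})J(i,j)$ is the Gram matrix of the disc kernel at the point $0$, with rank one. I would then try the Schur-product argument: for a fixed $f$, the matrix $\big(1-f(z_i)\overline{f(z_j)}\big)$ Schur-multiplied by $K_\varphi$ is positive for $\varphi=\alpha\circ f$ whenever $f$ separates the points. A single universal kernel is harder to find, and this is where I would spend effort.

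\emph{Inclusion $\supseteq$, the main obstacle.} Given $\uw\notin\sD_\Omega(\uz)$, we need some $K\in\mathcal{K}_\Omega$ with $\uw\notin\sDk$.

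First, $\sD_\Omega(\uz)$ is a compact convex balanced set. It is convex because $\mathcal{O}(\Omega,\overline{\D})$ is convex, compact by Montel's theorem, and contains a neighbourhood of $0$ thanks to point separation. Hence there is a linear functional $\ell(\underline{x})=\sum c_ix_i$ with $\operatorname{Re}\ell(\uw)>\sup_f|\ell(f(\uz))|=1$.

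Second, I would use the duality between such extremal problems and positivity: the cone of matrices $\big((1-x_i\overline{x_j})K(i,j)\big)$ constrained over all $f$ should be the dual of the cone generated by rank-one test matrices. The plan is to show, by a Hahn--Banach separation in the space of Hermitian matrices, that there is a positive definite $K$ with $\big((1-f(z_i)\overline{f(z_j)})K(i,j)\big)\geq\mathbf{0}$ for all $f$ while $\big((1-w_i\overline{w_j})K(i,j)\big)\not\geq\mathbf{0}$. This is the Agler-type realization step.

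If that separation fails in general, the fallback is a concrete witness. Take an extremal $f_0$ for $\ell$ and factor it by the disc theory: $f_0(\uz)$ lies on the boundary of $\sD_\D(\underline{\lambda})$ for the image points. Then use the pulled-back kernel $K_\varphi$, where $\varphi$ is an extremal function for the Carath\'eodory pseudodistance $\co$, so that the points map injectively. I expect this separation step to be the core difficulty.
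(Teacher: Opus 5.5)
\textbf{There is a genuine gap: the inclusion $\supseteq$, which is the whole content of the theorem, is never proved.}

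Your reformulation is legitimate. Taking $\mathcal{K}_\Omega$ to be all positive definite $K$ with $\sDo(\uz)\subseteq\sD_K$ gives the largest possible family, and the paper's family is contained in it. So the theorem is equivalent to the reverse inclusion for this family, and you stop at a plan for exactly that inclusion.

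The Hahn--Banach step is only the easy half. In the real space of Hermitian $n\times n$ matrices, let $\mathcal{C}$ be the convex cone generated by the Schur products $\big((1-f(z_i)\overline{f(z_j)})P_{ij}\big)$ with $f\in\mathcal{O}(\Omega,\overline{\D})$ and $P\geq 0$. Then either $(1-w_i\overline{w_j})$ lies in the closure $\overline{\mathcal{C}}$, or there is a positive semidefinite $K$ with $\sDo(\uz)\subseteq\sD_K$ and $\uw\notin\sD_K$; replacing $K$ by $K+\epsilon I_n$ restores definiteness.

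What is missing is the implication $(1-w_i\overline{w_j})\in\overline{\mathcal{C}}\Rightarrow\uw\in\sDo(\uz)$. This is an Agler--Pick type interpolation theorem, and all the work lies there. It needs two ingredients:
\begin{enumerate}
\item Closedness of the cone. One way to get it is to normalize the generators to $\psi\in\mathcal{O}(\Omega,\D)$ with $\psi(z_0)=0$ at an auxiliary point $z_0$. Then $|\psi(z_j)|\leq\co(z_0,z_j)<1$, so the $P$'s stay bounded.
\item A lurking-isometry realization $F=A+B\Phi(I-D\Phi)^{-1}C$ with $\Phi=\bigoplus_k\psi_k$. This turns a finite decomposition $1-w_i\overline{w_j}=\sum_k\big(1-\psi_k(z_i)\overline{\psi_k(z_j)}\big)P^{(k)}_{ij}$ into a global $F\in\mathcal{O}(\Omega,\overline{\D})$ with $F(z_j)=w_j$.
\end{enumerate}
The paper imports precisely this machinery. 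It takes test functions $\Psi=\{f\in\mathcal{O}(\Omega,\D):f(z_0)=0\}$ and uses Proposition 2.5 of the cited work, that the $\Psi$-Schur--Agler class equals $\mathcal{O}(\Omega,\overline{\D})$, together with the Schur--Agler interpolation theorem. It then takes $\mathcal{K}_\Omega$ to be the restrictions of $\Psi$-admissible kernels to $\{z_1,\ldots,z_n\}$, perturbed by $\frac{1}{m}I_n$ when they are only semidefinite.

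\textbf{Your fallback does not fill the gap.} As you observed yourself, $K_\varphi$ is admissible only when every $f$ behaves on the points like a function of $\varphi$, which is the geodesic situation. Moreover, a Carath\'eodory extremal for one pair need not even separate the points. Here is a counterexample in $\D^2$:
\begin{itemize}
\item Let $z_1=(0,0)$, $z_2=(a,0)$, $z_3=(c,b)$ with $a\neq 0$, $c\neq a$ and $|c|<|b|$. The first coordinate $\varphi$ is extremal for $(z_1,z_2)$.
\item For $c=0$, $\varphi$ identifies $z_1$ and $z_3$, so $K_\varphi$ is singular.
\item For $c\neq 0$, $\varphi$ is injective on the points. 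Yet with $f$ the second coordinate, the $\{1,3\}$ block of $\big((1-f(z_i)\overline{f(z_j)})K_\varphi(i,j)\big)$ is $\begin{pmatrix}1&1\\1&\frac{1-|b|^2}{1-|c|^2}\end{pmatrix}$. Its determinant $\frac{|c|^2-|b|^2}{1-|c|^2}$ is negative, so $\sDo(\uz)\not\subseteq\sD_{K_\varphi}$.
\end{itemize}

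\textbf{Nonemptiness needs no effort.} $K=I_n$ gives $\sD_{I_n}=\overline{\D}^n\supseteq\sDo(\uz)$. Your candidate $\epsilon I_n+J$ would in fact fail for small $\epsilon$, because $\big(1-f(z_i)\overline{f(z_j)}\big)$ is not positive semidefinite whenever $f$ is not constant on the points.
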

 Given $z_1,\ldots,z_n\in \Omega$ and a map $\phi\in \mathcal{O}(\D,\Omega)$ sending $\lambda_j$ to $z_j$, it is known that $\sDo(\uz)=\sD_\D (\underline{\lambda})$ if and only if $\phi$ is a Carath\'eodory geodesic (\cite{Biswas2025}). Therefore, having $\sDo(\uz)=\sDk$ for a single matrix $K$ is a necessary condition for the existence of a Carath\'eodory geodesic containing all $z_j$. In the next result, we give a condition under which $\sDo(z_1,z_2,z_3)$ is expressible as $\sD_\D (\lambda_1,\lambda_2,\lambda_3)$ (cf. Theorem 3.  and Lemma 4.1. in \cite{CW1996}).

 \begin{theorem}\label{3PointCharac}
 	Suppose that $z_1,z_2,z_3$ are three distinct points in a Carath\'eodory hyperbolic domain $\Omega$ and $\underline{\alpha}=(\alpha_1,\alpha_2,\alpha_3)\in \partial\sDo(\uz)\cap\D^3$ is a point such that the following hold:
 	\begin{enumerate}
 		\item There is $3\times 3$ positiive definite matrix $K$ such that $\sDk=\sDo(\uz)$.
 		\item For at least two pairs $(i,j),1\leq i<j\leq 3$, $(\alpha_i,\alpha_j)\in \partial\sDo(z_i,z_j)$.
 	\end{enumerate}
 	Then $\alpha_1,\alpha_2$ and $\alpha_3$ are distinct and $\sDo(\uz)=\sD_\D (\underline{\alpha})$.
 \end{theorem}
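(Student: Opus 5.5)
The plan is to show that the boundary condition at $\underline{\alpha}$ forces the matrix
\begin{align*}
M_{\underline{\alpha}}=\big((1-\alpha_i\overline{\alpha_j})K(i,j)\big)_{i,j=1}^3
\end{align*}
to have rank one. Rank one gives $K(i,j)=v_i\overline{v_j}/(1-\alpha_i\overline{\alpha_j})$ for some nonzero $v_1,v_2,v_3$ (all $|\alpha_i|<1$). Since $\sD_K$ is unchanged under $K\mapsto DKD^*$ for an invertible diagonal $D$, this yields $\sD_K=\sD_\D(\underline{\alpha})$, and hence, by hypothesis (1), $\sDo(\uz)=\sD_\D(\underline{\alpha})$. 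Distinctness comes for free: if $\alpha_i=\alpha_j$ for some $i\neq j$, then rows $i$ and $j$ of $\big(1/(1-\alpha_k\overline{\alpha_l})\big)$ coincide, so $K$ would be singular, contradicting positive definiteness.

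Step one records the two-variable picture. After scaling $K$ to have unit diagonal, the $2\times 2$ condition for $(a,b)\in\D^2$ reads
\begin{align*}
1-m(a,b)^2\ \ge\ |K(i,j)|^2,\qquad m(a,b)=\Big|\frac{a-b}{1-\overline{b}a}\Big|.
\end{align*}
So $\sD_{K_{ij}}$ (principal submatrix) is $\{m\le\rho_{ij}\}$ with $\rho_{ij}=(1-|K(i,j)|^2)^{1/2}$. On the other hand, $\sDo(z_i,z_j)=\{m\le c_\Omega^*(z_i,z_j)\}$, and its boundary points in $\D^2$ are exactly those with $m=c_\Omega^*(z_i,z_j)$. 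Since $\sDo(z_i,z_j)$ is the coordinate projection of $\sDo(\uz)=\sD_K$, which lies in $\sD_{K_{ij}}$, we get $c_\Omega^*(z_i,z_j)\le\rho_{ij}$.

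Step two is the crux: for the two pairs in hypothesis (2), I must show $c_\Omega^*(z_i,z_j)=\rho_{ij}$. Equivalently, the $(i,j)$ principal $2\times 2$ minor of $M_{\underline{\alpha}}$ must vanish, which needs $m(\alpha_i,\alpha_j)=\rho_{ij}$. I expect this to be the main obstacle, since projections of $3\times 3$ kernel balls need not equal the $2\times2$ kernel balls. I would attack it as follows:
\begin{itemize}
\item Use the coordinatewise Möbius invariance of both $\sDo$ and $\sD_K$. Composing $f$ with an automorphism only rescales $K$ by a diagonal congruence, so I may normalise, say, $\alpha_1=0$.
\item Use that $\underline{\alpha}\in\partial\sD_K\cap\D^3$, so $M_{\underline{\alpha}}\ge 0$ is singular.
\item Compare $\underline{\alpha}$ with nearby points $(\alpha_1,\alpha_2,\alpha_3+\varepsilon)$, and with points obtained from extremal functions for $c_\Omega^*(z_i,z_j)$, which lie in $\sD_K$.
\end{itemize}
If the minor for pair $(i,j)$ were strictly positive while $m(\alpha_i,\alpha_j)=c_\Omega^*(z_i,z_j)$, the aim is to perturb $\underline{\alpha}$ within $\sD_K$ so as to increase $m(\alpha_i,\alpha_j)$ past $c_\Omega^*(z_i,z_j)$. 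That would contradict the description of $\sDo(z_i,z_j)$. The perturbation should use the kernel vector of $M_{\underline{\alpha}}$ and a Schur-complement argument on the remaining index.

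Step three is then linear algebra. Write $M_{\underline{\alpha}}$ as the Gram matrix of vectors $u_1,u_2,u_3$, each nonzero because the diagonal entries are $(1-|\alpha_i|^2)K(i,i)>0$. A vanishing $2\times2$ principal minor for $(i,j)$ means $u_i\parallel u_j$. Two such pairs share an index, say $(1,2)$ and $(1,3)$, so $u_1,u_2,u_3$ are all parallel and $M_{\underline{\alpha}}$ has rank one. This concludes the argument as described in the first paragraph.
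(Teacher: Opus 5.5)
The gap is Step two. Your outer reduction is sound: rank one of $M_{\underline{\alpha}}$ gives the Szeg\H{o} form of $K$ up to diagonal congruence, hence $\sD_K=\sD_{\D}(\underline{\alpha})$ and distinctness. Step three also correctly turns two vanishing $2\times 2$ minors sharing an index into rank one.

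Step two, however, is only a plan, and the plan cannot work because it treats one pair at a time. For a single pair, hypothesis (1) only identifies a constant. For every positive definite $K$, the set $\sD_K$ is convex and invariant under $\underline{w}\mapsto(\varphi(w_1),\varphi(w_2),\varphi(w_3))$ for $\varphi\in\mathrm{Aut}(\D)$. Hence $\pi_{(1,2)}(\sD_K)\cap\D^2=\{m\le c\}$ for some $c\le\rho_{12}$, and (1) merely says $c=\co(z_1,z_2)$.

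Moreover, $c<\rho_{12}$ genuinely occurs. Take unit diagonal, $K(1,2)=\tfrac12$ and $K(1,3)=K(2,3)=s$ with $\tfrac23<s^2<\tfrac34$. Then $K$ is positive definite, since $\det K=\tfrac34-s^2$. Yet $(0,\tfrac{\sqrt3}{2})\notin\pi_{(1,2)}(\sD_K)$: the singular $2\times 2$ block forces $w_3=1/\sqrt3$, and positivity then requires $s^2\le\tfrac23$. So a maximiser $\underline{\alpha}$ of $m(w_1,w_2)$ over $\sD_K$ lies in $\partial\sD_K\cap\D^3$ with strictly positive $(1,2)$ minor. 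No single-pair perturbation argument can therefore force that minor to vanish. The second pair must enter essentially, and your sketch does not say how.

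The missing idea is to use both pairs at once. This gives rank one directly and makes Steps two and three unnecessary.

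\emph{Two supporting hyperplanes.} $\sD_K=\sDo(\uz)$ is contained in $\sDo(z_1,z_2)\times\overline{\D}$ and in $\{\underline{w}:(w_1,w_3)\in\sDo(z_1,z_3)\}$, with $\underline{\alpha}$ on the boundary of each. This gives two supporting hyperplanes of $\sD_K$ at $\underline{\alpha}$. Their normals have vanishing $w_3$-, respectively $w_2$-, component. They cannot coincide, because a normal in the $w_1$-direction alone is impossible: $\{(t,t):t\in\D\}$ lies in each $\sDo(z_1,z_j)$ and $|\alpha_1|<1$. So $\underline{\alpha}$ is a non-smooth boundary point.

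\emph{Rank two forces smoothness.} Write $K(i,j)=\langle k_j,k_i\rangle$ and $T_{\underline{w}}k_j=\overline{w_j}k_j$. Then $\sD_K=\{\|T_{\underline{w}}\|\le 1\}$ and $M_{\underline{w}}$ is the Gram matrix of $I-T_{\underline{w}}^*T_{\underline{w}}$. If $M_{\underline{\alpha}}$ had rank $2$, the eigenvalue $1$ of $T_{\underline{\alpha}}^*T_{\underline{\alpha}}$ would be simple. By the implicit function theorem, $\underline{w}\mapsto\|T_{\underline{w}}\|$ would then be smooth near $\underline{\alpha}$, with nonzero gradient by homogeneity. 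So $\partial\sD_K$ would be smooth at $\underline{\alpha}$, a contradiction.

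This is exactly the paper's route: Proposition \ref{SingularElement} applied to the singular point $\underline{\alpha}$.
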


For $\D^2$, the coordinate functions play a fundamental role in the study of $\mathcal{O}(\D^2,\overline{\D})$. Apart from the fact that these functions are essential for the Agler decomposition (\cite{A-M-Paper}, \cite{A-M}), a building block for the study of $\mathcal{O}(\D^2,\overline{\D})$, they are the unique Carath\'eodory extremals and contains sufficient data to identify $\D^2$ (\cite{AglLykYouCharac}). The significance of this extremality  of certain functions with respect to the points $z_j$ is implicit in Cole and Wermer's work on $\D^2$ (\cite{CW1996}). Also the fact that for $\D^2$ there are only two such functions has an important role in that development (cf. the proof of Theorem 3. in \cite{CW1996}). But, for general domains, neither such convenient Agler decomposition exists, nor the Carath\'eodory extremals are well understood yet. However, it is possible to find meaningful results in the context of Pick bodies using the Carath\'eodory pseudodistance, the definition of which and a generalization of it are the following.
\begin{definition*}
	Suppose that $z_1,\ldots,z_n$ are points in a domain $\Omega$. 
	\begin{enumerate}
		\item The Carath\'eodory pseudodistance between $z_1$ and $z_2$ is defined by
		\begin{align*}
			c_\Omega ^* (z_1,z_2)=sup\{m(f(z_1),f(z_2))=\Big|\frac{f(z_1)-f(z_2)}{1-\overline{f(z_1)}f(z_2)}\Big|:f\in \mathcal{O}(\Omega,\D)\}.
		\end{align*}
		\item The generalized Carath\'eodory function (also called the generalized M\"obius function, see \cite{J-P-Invariant}) is defined by
		\begin{align*}
			c_\Omega ^*(z_1;z_2,\ldots,z_n)=sup\{|f(z_1)|:f\in \mathcal{O}(\Omega,\D), f(z_2)=\ldots=f(z_n)=0\}.
		\end{align*}
	\end{enumerate}
	 
\end{definition*}
Before stating the next results, we need a few more definitions.
\begin{definition*}
	Suppose that $z_1,\ldots,z_n$ are mutually distinct points in a domain $\Omega$ and $K:\{z_1,\ldots,z_n\}\times \{z_1,\ldots,z_n\}\rightarrow \C$ is a function such that $\big(K(z_i,z_j)\big)_{1\leq i,j\leq n}$ is a positive definite matrix. Let $I=(i_1,\ldots,i_k)$ be a $k$-tuple of integers such that $1\leq i_1<\ldots<i_k\leq n$. 
	\begin{enumerate}
		\item By $\pi_I:\mathbb{C}^n\rightarrow\mathbb{C}^k$ we denote the mapping $\underline{z}\mapsto \underline{z} _I=(z_{i_1},\ldots,z_{i_k})$.
		\item For $\underline{z}\in \Omega^n$, we define $\sD_\Omega(\underline{z}_I)=\{(f(z_{i_1}),\ldots,f(z_{i_k})):f\in \mathcal{O}(\Omega,\overline{\D})\}$.
		\item The set $\{(w_1,\ldots,w_k)\in \mathbb{C}^k:\Big((1-w_{l}\overline{w_{m}})K(z_{i_l},z_{i_m})\Big)\geq \mathbf{0}\}$ will be denoted by $\sD_K(z_{i_1},\ldots,z_{i_k})=\sD_K(\underline{z}_I)$.
	\end{enumerate}
\end{definition*}
It is easy to verify that when $\Omega$ is Carath\'eodory hyperbolic, $\pi_I(\sD_\Omega(\underline{z}))=\sD_\Omega(\underline{z}_I)$ for any $I$ as above. One way to see this is to observe that a point of $\partial\sD_\Omega(\underline{z}_I)$ (the boundary of a set $X$ will be denoted by $\partial X$) can not come from the components of an interior point of $\sD_\Omega(\underline{z})$. Note that if $\sDo(\uz)=\sD_\D (\underline{\lambda})$, then for any $k$-tuple $I$, $\sDo(\uz_I)=\sD_\D (\underline{\lambda}_I)$. Observing this property, we now define the following.
\begin{definition*}
	A function $K:\{z_1,\ldots,z_n\}\times \{z_1,\ldots,z_n\}\rightarrow \mathbb{C}$ is said to be an $n$-extremal kernel (or, an extremal kernel if $n$ is clear from the context) if the matrix $(K(z_i,z_j))_{1\leq i,j\leq n}$
	is positive definite and for each $k$-tuple $I$ and each $\underline{w}^\prime\in \partial\sD_K(\underline{z}_I)$, there is a $\underline{w}\in\partial\sD_K(\underline{z})$ such that $\pi_I(\underline{w})=\underline{w}^\prime$. 
\end{definition*}
\begin{example*}
	\begin{enumerate}
		\item Suppose that $(K(i,j))_{1\leq i,j\leq n}$ is a diagonal $n\times n$ matrix with positive diagonal entries. It is easy to see that $\sD_K$ is the closed unit polydisc in $\mathbb{C}^n$. Since a point lies on the boundary of a polydisc if and only if at least one of its component has modulus one, we see that $K$ is an extremal kernel.
		\item  Let $\lambda_1,\ldots, \lambda_n$ be $n$ distinct points in the open unit disc $\mathbb{D}$. Then the function $K$ defined on $\{\lambda_1,\ldots, \lambda_n\}\times \{\lambda_1,\ldots, \lambda_n\}$ by $K(\lambda_i,\lambda_j)=\frac{1}{1-\lambda_i\overline{\lambda_j}}$ gives an extremal kernel. If we are given a $k$-tuple $I$, then for any $\underline{w}^\prime\in \partial\sD_K(\underline{z}_I)$, there is a unique Blaschke product of degree at most $k-1$ that interpolates $\underline{\lambda}_I\mapsto\underline{w}^\prime$ (see \cite{Biswas2025}). The values of this Blaschke product at other $\lambda_j$ give us the corresponding point which is projected by $\pi_I$ onto $\underline{w}^\prime$ 
	\end{enumerate}
\end{example*}
We later show that if $\sDk(\uz)$ is a Pick body $\sDo(\uz)$, then the entries of $K$ are determined up to unimodular constants. Now, with the notion of extremal kernels, we have two theorems. The first one gives an estimate for the generalized Carath\'eodory function.
\begin{theorem}\label{BoundOnGeneralizedCara}
	Let $z_1,z_2,z_3$ be three distinct points in a Carath\'eodory hyperbolic domain $\Omega$, and $K$ a $3$-extremal kernel defined on $\{z_1,z_2,z_3\}$ satisfying $\sDo(\uz)=\sDk(\uz)$. If $\underline{\alpha}=(\alpha_1,\alpha_2,\alpha_3)\in \partial \sDo(\uz)\cap\D^3$ satisfies $m(\alpha_1,\alpha_2)=\co(z_1,z_2)$, then 
	\begin{align*}
		\co(z_3;z_1,z_2)\geq max\Biggl\{\sqrt{\frac{\co(z_j,z_3)^2 -m(\alpha_j,\alpha_3)^2}{1-m(\alpha_j,\alpha_3)^2}}:j=1,2\Biggr\}.
	\end{align*}
\end{theorem}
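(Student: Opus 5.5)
The plan is to turn the statement into a matrix inequality about $K$. I will characterize $\co(z_3;z_1,z_2)$ as the largest $|w|$ for which $(0,0,w)$ lies in $\sDk(\uz)$. Then I will use the boundary point $\underline{\alpha}$ to control the relevant Schur complement.

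\emph{Step 1 (normalization).} For a positive diagonal matrix $D$, the matrix $\big((1-w_i\overline{w_j})K(i,j)\big)$ is positive semidefinite exactly when $D\big((1-w_i\overline{w_j})K(i,j)\big)D$ is. Hence replacing $K$ by $DKD$ does not change $\sDk$, and I may assume $K(j,j)=1$ for all $j$.

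\emph{Step 2 (off-diagonal moduli).} The two-point Pick body is
\begin{align*}
\sDo(z_i,z_j)=\{(w_i,w_j)\in\overline{\D}^2: m(w_i,w_j)\leq \co(z_i,z_j)\}.
\end{align*}
Since $\sDo(\uz)=\sDk(\uz)$ and projections are compatible, $\sDo(z_i,z_j)=\sDk(z_i,z_j)$. The $2\times 2$ determinant condition reads
\begin{align*}
(1-|w_i|^2)(1-|w_j|^2)\geq |1-w_i\overline{w_j}|^2|K(i,j)|^2,
\end{align*}
which is equivalent to $1-m(w_i,w_j)^2\geq |K(i,j)|^2$. Comparing the two descriptions gives $|K(i,j)|^2=1-\co(z_i,z_j)^2$ for every pair $i<j$.

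\emph{Step 3 (the generalized Carath\'eodory function as a Schur complement).} By a Montel argument, together with the maximum principle to pass from $\overline{\D}$-valued to $\D$-valued maps, $(0,0,w)\in\sDo(\uz)$ if and only if $|w|\leq\co(z_3;z_1,z_2)$. Write $A=(K(i,j))_{i,j\leq 2}$ and $v=(K(1,3),K(2,3))^T$. Then $(0,0,w)\in\sDk$ if and only if $1-|w|^2\geq v^*A^{-1}v$. Therefore
\begin{align*}
\co(z_3;z_1,z_2)^2=1-v^*A^{-1}v.
\end{align*}
So it suffices to show, for $j=1,2$,
\begin{align*}
v^*A^{-1}v\leq \frac{1-\co(z_j,z_3)^2}{1-m(\alpha_j,\alpha_3)^2}=\frac{|K(j,3)|^2}{1-m(\alpha_j,\alpha_3)^2}.
\end{align*}

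\emph{Step 4 (using $\underline{\alpha}$; the main obstacle).} Set $M=\big((1-\alpha_i\overline{\alpha_j})K(i,j)\big)$. Then $M\geq 0$, and because $m(\alpha_1,\alpha_2)=\co(z_1,z_2)$, its upper-left $2\times 2$ block $A_\alpha$ is singular of rank one (the diagonal is positive since $\underline{\alpha}\in\D^3$). Positivity of $M$ then forces the kernel vector $u$ of $A_\alpha$ to satisfy $Mu=0$. Writing $\beta_i=1-\alpha_i\overline{\alpha_3}$, this gives a linear relation
\begin{align*}
u_1\overline{\beta_1}K(1,3)+u_2\overline{\beta_2}K(2,3)=0.
\end{align*}
Here $u$ is explicit up to scaling: $u\propto\big(-(1-\alpha_1\overline{\alpha_2})K(1,2),\,1-|\alpha_1|^2\big)$.

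With $K(1,2)$ and $K(1,3)$ as free parameters, I will substitute this relation into $v^*A^{-1}v$. Here $A^{-1}=\frac{1}{1-|K(1,2)|^2}\begin{pmatrix}1&-K(1,2)\\-\overline{K(1,2)}&1\end{pmatrix}$, and the moduli of all $K(i,j)$ are known from Step 2. The task is then to show the resulting expression is at most $|K(j,3)|^2/(1-m(\alpha_j,\alpha_3)^2)$. I would use the identity $|1-\alpha_i\overline{\alpha_3}|^2(1-m(\alpha_i,\alpha_3)^2)=(1-|\alpha_i|^2)(1-|\alpha_3|^2)$ and try to reduce everything to the Schwarz--Pick form of the Pick matrix. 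I expect this computation, and keeping track of the phases of $K(i,j)$, to be the hard part.

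If the direct estimate does not close, the fallback uses the $3$-extremality of $K$. Lift $(\alpha_j,\alpha_3)$, or a boundary point of $\sDk(z_j,z_3)$ adapted to it, to a point of $\partial\sDk(\uz)$. This would produce a second singular positive semidefinite matrix and a second linear relation among the $K(i,3)$, and the two relations together should pin down $v$ enough to bound $v^*A^{-1}v$.

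Finally, the bound for $j=1$ and $j=2$ is obtained by the same argument with the roles of $z_1$ and $z_2$ exchanged, which gives the maximum.
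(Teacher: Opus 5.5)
Your Steps 1--3 and the first half of Step 4 follow the paper's proof closely. You normalize $K(j,j)=1$ and get $|K(i,j)|^2=1-\co(z_i,z_j)^2$ from the two-point bodies; this is where $3$-extremality enters. You then note that the upper-left $2\times 2$ block of $\big((1-\alpha_i\overline{\alpha_j})K(i,j)\big)$ is singular, and push its kernel vector $u$ into the full $3\times 3$ matrix.

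There is one slip. The third row gives $u_1\overline{\beta_1}K(3,1)+u_2\overline{\beta_2}K(3,2)=0$. Your version, with $K(1,3),K(2,3)$, is conjugated inconsistently, and since the phases are exactly what matters next, this would derail the substitution.

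The real problem is that the proposal stops where the argument has to be made. The inequality $v^*A^{-1}v\leq |K(j,3)|^2/(1-m(\alpha_j,\alpha_3)^2)$ is labelled ``the hard part'' and left unproved, with only a vague fallback. As written, the theorem is not established.

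The missing idea is to read the linear relation structurally. Put $S_{i,j}=\frac{\sqrt{1-|\alpha_i|^2}\sqrt{1-|\alpha_j|^2}}{1-\alpha_i\overline{\alpha_j}}$ (the paper's $M_{i,j}$). Since $|K(1,2)|=|S_{1,2}|$, write $K(1,2)=e^{i\theta}S_{1,2}$. Then $u$ is proportional to $(-e^{i\theta}\sqrt{1-|\alpha_2|^2},\sqrt{1-|\alpha_1|^2})$. The corrected relation says precisely that, with $\mu:=K(3,1)/S_{3,1}$, one has $K(3,2)=\mu e^{i\theta}S_{3,2}$. Take $U=\mathrm{diag}(1,e^{-i\theta},1)$, $s=(S_{1,3},S_{2,3})^T$, and $S_{[12]}$ the upper-left $2\times 2$ block of $(S_{i,j})$. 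Then
\[
K=U\begin{pmatrix} S_{[12]} & \overline{\mu}\,s\\ \mu\,s^* & 1\end{pmatrix}U^*.
\]
Your Step 2 now gives $|\mu|^2=\frac{1-\co(z_j,z_3)^2}{1-m(\alpha_j,\alpha_3)^2}$ for both $j=1,2$. So the two quantities in the maximum coincide, and no symmetry argument is needed. Moreover,
\[
v^*A^{-1}v=|\mu|^2\,s^*S_{[12]}^{-1}s\leq |\mu|^2,
\]
because $(S_{i,j})$ is positive semidefinite (a normalized Szeg\H{o} Gram matrix) with $S_{3,3}=1$.

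This is exactly the paper's route. It derives $K(3,1)=\mu S_{3,1}$ and $K(3,2)=\mu e^{i\theta}S_{3,2}$ from the kernel-vector relation, then checks that $(0,0,\sqrt{1-|\mu|^2})\in\sDk(\uz)$. No second use of extremality is required.
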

The following theorem gives a sufficient condition for a $\sDo(\uz)$ to be expressible as $\sD_\D (\underline{\lambda})$.
\begin{theorem}\label{nPointTheorem}
	Suppose that $z_1,\ldots,z_n$ are mutually distinct points in a Carath\'eodory hyperbolic domain $\Omega$ and $K$ is an extremal kernel on $\{z_1,\ldots,z_n\}$ with $K(z_i,z_i)=1$ for all $i$. If $\sDo(\uz)=\sDk(\uz)$ and there is an $\underline{\alpha}=(\alpha_1,\ldots,\alpha_n)\in \sDo(\uz)$ such that $m(\alpha_1,\alpha_j)=\co(z_1,z_j)$ for $j=2,\ldots, n$, then $\sDo(\uz)=\sD_\D(\underline{\alpha})$.
\end{theorem}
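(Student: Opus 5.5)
The plan is to show that $K$ coincides, up to conjugation by a diagonal unitary, with the normalized Szeg\H{o}-type kernel built from $\underline{\alpha}$. The tool throughout is the identity
\begin{align*}
1-m(a,b)^2=\frac{(1-|a|^2)(1-|b|^2)}{|1-a\overline{b}|^2},\qquad a,b\in\D .
\end{align*}

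\emph{Step 1: reduction to $\D^n$.} First I check that $\underline{\alpha}\in\D^n$ and that the $\alpha_j$ are distinct. Since $z_1\neq z_j$ and $\Omega$ is Carath\'eodory hyperbolic, $\co(z_1,z_j)>0$, so $\alpha_j\neq\alpha_1$ for $j\geq 2$.

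Suppose some $|\alpha_i|=1$, and let $f\in\mathcal{O}(\Omega,\overline{\D})$ realize $\underline{\alpha}$. By the maximum principle $f$ is constant, so all $\alpha_j$ are equal. This contradicts $\alpha_j\neq\alpha_1$. Hence $\underline{\alpha}\in\D^n$.

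Next I observe that $\sDo(\uz)$ is invariant under positive diagonal congruence of the kernel and under conjugation $K\mapsto UKU^*$ with $U$ diagonal unitary. Therefore $\sD_\D(\underline{\alpha})=\sD_{S}$, where
\begin{align*}
S(i,j)=\frac{\sqrt{(1-|\alpha_i|^2)(1-|\alpha_j|^2)}}{1-\alpha_i\overline{\alpha_j}} .
\end{align*}
By the identity above, $S(i,i)=1$ and $|S(i,j)|^2=1-m(\alpha_i,\alpha_j)^2$. The goal is to prove $K=USU^*$ for some diagonal unitary $U$.

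\emph{Step 2: computing $|K(z_1,z_j)|$.} Since $K(z_i,z_i)=1$, the $2\times 2$ condition defining $\sDk(z_1,z_j)$ reads $(1-|w_1|^2)(1-|w_j|^2)\geq|1-w_1\overline{w_j}|^2|K(z_1,z_j)|^2$. Equivalently, $m(w_1,w_j)^2\leq 1-|K(z_1,z_j)|^2$, at least for points in $\D^2$.

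For the upper bound, $\underline{\alpha}\in\sDk(\uz)$ and principal $2\times2$ minors of a positive semidefinite matrix are nonnegative. Hence $(\alpha_1,\alpha_j)\in\sDk(z_1,z_j)$, which gives $|K(z_1,z_j)|^2\leq 1-\co(z_1,z_j)^2$.

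For the reverse inequality I use extremality of $K$. Choose $(w_1,w_j)\in\partial\sDk(z_1,z_j)\cap\D^2$ with $m(w_1,w_j)^2=1-|K(z_1,z_j)|^2$. Lift it to some $\underline{w}\in\partial\sDk(\uz)=\partial\sDo(\uz)$. Then $\underline{w}$ is realized by some $f\in\mathcal{O}(\Omega,\overline{\D})$, and $f$ is nonconstant since $w_1\neq w_j$. So $f$ maps $\Omega$ into $\D$, and
\begin{align*}
m(w_1,w_j)\leq\co(z_1,z_j).
\end{align*}
This yields $|K(z_1,z_j)|^2\geq 1-\co(z_1,z_j)^2$. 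Hence $|K(z_1,z_j)|=|S(1,j)|$ for $j=2,\ldots,n$, using the hypothesis $m(\alpha_1,\alpha_j)=\co(z_1,z_j)$.

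\emph{Step 3: rank-one rigidity.} Since $\underline{\alpha}\in\sDk(\uz)$, the matrix $\big((1-\alpha_i\overline{\alpha_j})K(z_i,z_j)\big)$ is positive semidefinite. Conjugating by the positive diagonal matrix $\mathrm{diag}\big((1-|\alpha_i|^2)^{-1/2}\big)$ shows that $P=\big(K(z_i,z_j)/S(i,j)\big)_{i,j}$ is positive semidefinite. It has unit diagonal and, by Step 2, $|P_{1j}|=1$ for all $j$.

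For a positive semidefinite matrix with $P_{11}=P_{jj}=1$ and $|P_{1j}|=1$, the $2\times2$ principal minor vanishes. Therefore the $j$-th basis vector is mapped by $P^{1/2}$ to a unimodular multiple of the image of the first. It follows that $P=uu^*$ with $|u_i|=1$, and so $K(z_i,z_j)=u_i\overline{u_j}S(i,j)$. By the invariance noted in Step 1, $\sDo(\uz)=\sDk(\uz)=\sD_S=\sD_\D(\underline{\alpha})$.

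\emph{Main obstacle.} The delicate point is the lower bound in Step 2. One must make sure that the chosen point $(w_1,w_j)$ on $\partial\sDk(z_1,z_j)$ can be taken inside $\D^2$ with $w_1\neq w_j$, so that the lifted function is nonconstant and the Carath\'eodory bound applies. I would handle this by taking $w_1=0$ and $|w_j|=\sqrt{1-|K(z_1,z_j)|^2}<1$. This requires $|K(z_1,z_j)|>0$, which holds because $|K(z_1,z_j)|^2\geq 1-\co(z_1,z_j)^2$ would otherwise be violated only if $\co(z_1,z_j)=1$, and that is impossible for $\co<1$. The rest of the argument is linear algebra.
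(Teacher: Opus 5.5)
Your argument is correct in outline but takes a more direct route than the paper. The paper inducts on $n$. It applies the induction hypothesis to $K$ restricted to $\{z_1,\ldots,z_{n-1}\}$ and uses Lemma \ref{SzegoExtremal} to write that block as $e^{i(\theta_l-\theta_m)}S(l,m)$. It then examines the $3\times 3$ principal blocks on $\{l,m,n\}$ to show $e^{i\theta_l}a_{n,l}$ does not depend on $l$, and $|a_{1,n}|=1$ fixes $\theta_n$. You skip induction: the hypothesis already gives $|P_{1j}|=1$ for every $j$. A positive semidefinite matrix with unit diagonal and unimodular first row is the Gram matrix of unit vectors all parallel to the first, so $P=uu^*$ at once. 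Your route needs extremality only for the pairs $(1,j)$ and avoids Lemma \ref{SzegoExtremal}. It also avoids passing extremality down to the $(n-1)$-subtuple. Your Step 1 makes explicit that $\underline{\alpha}\in\D^n$, which the paper uses tacitly when dividing by $\sqrt{1-|\alpha_i|^2}$. The paper's induction amounts to the same Cauchy--Schwarz rigidity, applied one column at a time.

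There is one circular step, in your last paragraph. You need $|K(z_1,z_j)|>0$ so that the point $(0,w_j)$ with $|w_j|=\sqrt{1-|K(z_1,z_j)|^2}$ lies in $\D^2$. You justify it by citing $|K(z_1,z_j)|^2\geq 1-\co(z_1,z_j)^2$, but that is exactly the inequality this choice of point is meant to prove. The upper bound coming from $\underline{\alpha}$ does not rule out $K(z_1,z_j)=0$.

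The fix is short. If $K(z_1,z_j)=0$, then $\sDk(z_1,z_j)=\overline{\D}^2$, so $(0,1)\in\partial\sDk(z_1,z_j)$. Extremality lifts this to a point of $\partial\sDo(\uz)\subseteq\sDo(\uz)$. That gives some $f\in\mathcal{O}(\Omega,\overline{\D})$ with $f(z_1)=0$ and $|f(z_j)|=1$, which contradicts the maximum principle.

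Alternatively, use Lemma \ref{ExtremalKernel} to sidestep boundary points. It puts every $(0,r)$ with $0<r<1$ and $r^2\leq 1-|K(z_1,z_j)|^2$ into $\pi_{(1,j)}(\sDo(\uz))$. Hence $r\leq \co(z_1,z_j)$ for all such $r$, which yields the lower bound directly. When $K(z_1,z_j)=0$ it instead gives $\co(z_1,z_j)\geq 1$, which is impossible.

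With this repair the rest of your proof goes through. In Step 1, the invariance you state is of $\sDk$, not $\sDo(\uz)$.
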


\section{Proofs of the results}
First we prove Theorem \ref{PickBodyAsIntersection}. Before going into the proof we recall the notion of Schur-Agler class generated by a collection of test function (\cite{B-H}, \cite{BhatBisCha}, \cite{D-M}). Suppose we are given a collection $\Psi$ of functions in $\mathcal{O}(\Omega,\D)$, and we consider kernels $\Gamma:\Omega\times\Omega\rightarrow \C$ such that $$\big((1-\psi(z),\overline{\psi(w)})\Gamma(z,w)\big)_{z,w\in \Omega}\geq \mathbf{0}\,\,\text{for all}\,\,\psi\in \Psi.$$
The set of all such $\Gamma$ is called the set of admissible kernels and is denoted by $\mathcal{K}_\Psi$.
The $\Psi$-Schur-Agler class is the collection of all functions $f:\Omega\rightarrow\C$ such that 
$$\big((1-f(z),\overline{f(w)})\Gamma(z,w)\big)_{z,w\in \Omega}\geq \mathbf{0}\,\,\text{for all}\,\,\Gamma\in \mathcal{K}_\Psi.$$

\begin{proof}[\textbf{Proof of Theorem \ref{PickBodyAsIntersection}}]

		We fix an arbitrary point $z_0\in \Omega$ different from $z_j$, $1\leq j\leq n$, and consider $\Psi=\{f\in \mathcal{O}(\Omega,\D):f(z_0)=0\}$. By Proposition $2.5$ in \cite{BiswasSchur}, the Schur-Agler class generated by $\Psi$ is precisely the space $\mathcal{O}(\Omega,\overline{\D})$. Let the set of all $\Psi$-admissible kernels be denoted by $\mathcal{K}_\Psi$. For $\Gamma\in \mathcal{K}_\Psi$, if $K_\Gamma=\Gamma|_{\{z_1,\ldots,z_n\}\times \{z_1,\ldots,z_n\}}$ is positive semidefinite, we consider the collection $\{K_\Gamma +\frac{1}{m}I_n:m\geq 1\}$. Clearly each of these matrices is positive definite. If $K_\Gamma$ is positive definite, we consider the singleton set $\{K_\Gamma\}$. Let $\mathcal{K}_\Omega$ be the union of the collections we just constructed. 
		
		Now we show that $\mathcal{K}_\Omega$ satisfies (\ref{IntersectionOfKernelBalls}). Let $(w_1,\ldots,w_n)\in \bigcap_{K\in \mathcal{K}_\Omega} \sD_K$ and $\Gamma\in \mathcal{K}_\Psi$. If $K_\Gamma$ is positive definite, then it is in $K_\Omega$ and we have
		\begin{align*}
			\big((1-w_i\overline{w_j})K_\Gamma (z_i,z_j)\big)=\big((1-w_i\overline{w_j})\Gamma (z_i,z_j)\big)\geq \mathbf{0}.
		\end{align*}
		If $K_\Gamma$ is positive semidefinite, then $K_\Gamma +\frac{1}{m}I_n\in \mathcal{K}_\Omega$ for all $m\geq 1$. Hence, for all $m$ we have 
		\begin{align*}
			\big((1-w_i\overline{w_j})\Gamma (z_i,z_j)\big) +\frac{1}{m}diag(1-|w_1|^2,\ldots,1-|w_n|^2)\geq \mathbf{0}.
		\end{align*}
		So we conclude $\big((1-w_i\overline{w_j})\Gamma (z_i,z_j)\big)\geq \mathbf{0}$ for all $\Gamma\in \mathcal{K}_\Psi$ and therefore, there is an element $f$ in the $\Psi$-Schur-Agler class, that is, in $\mathcal{O}(\Omega, \overline{\D})$ such that $f(z_j)=w_j, 1\leq j\leq n$. This shows that $(w_1,\ldots,w_n)\in \sD_\Omega(\underline{z})$.
		
		Conversely, if $(w_1,\ldots,w_n)\in \sD_\Omega(\underline{z})$, then we find an $f\in \mathcal{O}(\Omega,\overline{\D})$ sending $z_j$ to $w_j$. For any $\Gamma\in \mathcal{K}_\Psi$, $\big((1-f(z')\overline{f(z'')})\Gamma(z',z'')\big)_{z',z''\in \Omega}$ is a positive kernel. In particular, $\big((1-w_i\overline{w_j})\Gamma (z_i,z_j)\big) +\frac{1}{m}diag(1-|w_1|^2,\ldots,1-|w_n|^2)\geq \mathbf{0}$ irrespective of whether $\Gamma|_{\{z_1,\ldots,z_n\}\times \{z_1,\ldots,z_n\}}$ is positive definite or not. By our construction of $\mathcal{K}_\Omega$, $(w_1,\ldots, w_n)\in \sD_K$ for all $K\in \mathcal{K}_\Omega$.
		
		This completes the proof.
	\end{proof}
	
	Suppose that we are given a compact convex subset $D$ of $\Cn$ with nonempty interior and $\uw\in \partial D$. If the supporting hyperplane at $\uw$ is not unique, we call $\uw$ a singular point. When there is a unique supporting hyperplane, we call call $\uw$ a smooth point (\cite{CW1996}). The following proposition and Theorem \ref{3PointCharac} are similar to the corresponding results in \cite{CW1996}. Since our definitions slightly differ from theirs, we have included the  proofs here. The proofs follow almost the same arguments that are in \cite{CW1996}.

	\begin{proposition}\label{SingularElement}
		Suppose that $K$ is a $3\times 3$ positive definite matrix, $\sDk=\{\uw\in \C^3:\big((1-w_i\overline{w_i})K_{i,j}\big)\geq \mathbf{0}\}$, and $\underline{\alpha}=(\alpha_1,\alpha_2,\alpha_3)\in \partial\sDk\cap\D^3$ is a singular point on $\partial\sDk$. Then $\sDk=\sD_\D (\underline{\alpha})$.
	\end{proposition}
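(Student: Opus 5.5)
Since $\underline{\alpha}\in\partial\sDk\cap\D^3$, the matrix $P_\alpha=\big((1-\alpha_i\overline{\alpha_j})K_{ij}\big)$ is positive semidefinite and singular. Conjugating by the positive diagonal matrix $\mathrm{diag}\big((1-|\alpha_i|^2)^{-1/2}K_{ii}^{-1/2}\big)$ changes neither the membership condition for $\sDk$ nor the position of $\underline{\alpha}$, so I will assume the diagonal of $P_\alpha$ is $1$. The first step is to compute the supporting hyperplanes at $\underline{\alpha}$. Near $\underline{\alpha}$ the body is $\{\uw:\det P_{\uw}\ge0\}$ intersected with the open set where the leading $2\times2$ minors stay positive. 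If $\operatorname{rank}P_\alpha=2$, there is a unique kernel vector $v$, and the gradient of $\lambda_{\min}(P_{\uw})$ at $\underline{\alpha}$ is the real-linear functional
\[
\uw\mapsto -2\,\mathrm{Re}\sum_{i,j}\overline{v_i}v_jK_{ij}\,\alpha_i\overline{w_j}.
\]
This functional is nonzero because $\sum_j v_jK_{ij}\overline{\alpha_j}\ne0$ for some $i$: otherwise $Kv=0$ from $P_\alpha v=0$, contradicting positive definiteness of $K$. Hence in the rank-$2$ case $\underline{\alpha}$ is a smooth point. Singularity therefore forces $\operatorname{rank}P_\alpha\le1$, and rank $0$ is impossible because the diagonal is positive. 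So $P_\alpha=uu^*$ with every $|u_i|>0$.

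In the second step I translate this rank-one condition into a formula for $K$. Writing $K_{ij}=u_i\overline{u_j}/(1-\alpha_i\overline{\alpha_j})$, and noting that $1-\alpha_i\overline{\alpha_j}\ne0$ because all $|\alpha_i|<1$, I get $K=DK_\alpha D^*$. Here $D=\mathrm{diag}(u_i)$ and $K_\alpha(i,j)=1/(1-\alpha_i\overline{\alpha_j})$ is the Szegő kernel at $\alpha_1,\alpha_2,\alpha_3$. Conjugation by the invertible diagonal $D$ preserves positivity of Schur products, so $\sDk=\{\uw:((1-w_i\overline{w_j})/(1-\alpha_i\overline{\alpha_j}))\ge0\}$. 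I still have to check that the $\alpha_i$ are distinct, since only then is this literally the Pick body $\sD_\D(\underline{\alpha})$. If $\alpha_i=\alpha_j$ for some $i\ne j$, then rows $i$ and $j$ of $K_\alpha$ coincide, so $K_\alpha$ is singular; this would make $K$ singular, a contradiction. With the $\alpha_i$ distinct, the Pick theorem quoted in the introduction gives $\sDk=\sD_\D(\underline{\alpha})$.

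I expect the main obstacle to be the first step: showing that a rank-$2$ boundary point is smooth, and that rank $\le1$ is what singularity means. This needs care for two reasons. The boundary of $\sDk$ is described by $\det\ge0$ only locally, and one must make sure the constraint coming from the nonvanishing principal minors is inactive near $\underline{\alpha}$. The latter holds because rank $2$ with a unit diagonal and a unique kernel vector forces every $2\times2$ principal minor to be positive, since $v$ has no zero entries in the non-degenerate case. I would handle this by perturbation theory: for a simple eigenvalue $\lambda_{\min}$, the function $\uw\mapsto\lambda_{\min}(P_{\uw})$ is real-analytic near $\underline{\alpha}$. Its gradient is the nonzero functional computed above, so $\partial\sDk$ is a $C^1$ hypersurface there and the supporting hyperplane is unique. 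If some entry of $v$ vanished, I would instead reduce to a $2\times2$ block, where the Schwarz–Pick equality again produces a rank-one structure, and treat it directly.
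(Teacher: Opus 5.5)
Your route is the paper's. You take the rank of the singular positive semidefinite matrix $P_{\underline\alpha}$, exclude rank $0$ because $\underline\alpha\in\D^3$, and exclude rank $2$ because a simple eigenvalue gives a smooth local defining function with nonzero gradient. From rank one you read off $K=DK_\alpha D^*$, and invertibility of $K$ forces the $\alpha_i$ to be distinct. The paper runs the rank-$2$ step through the top eigenvalue of $T_{\uw}^*T_{\uw}$ instead of $\lambda_{\min}(P_{\uw})$. The two are equivalent, since $P_{\uw}$ represents the form of $I-T_{\uw}^*T_{\uw}$ in the basis $k_1,k_2,k_3$. The paper also cites Lemma~3.1 of Cole--Wermer for the rank-one step, which you do directly.

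\textbf{The gap: nonvanishing of the gradient.} This is the crux of the rank-$2$ step, and it does not hold as you argue it. Put $b_i=\sum_j K_{ij}\overline{\alpha_j}v_j$. Your functional equals $\uw\mapsto-2\,\mathrm{Re}\sum_i\overline{v_i}\,b_i\,w_i$, so you need $\overline{v_i}b_i\neq0$ for some $i$. The identity $(Kv)_i=\alpha_ib_i$ only gives $b\neq0$. The two differ exactly when $v$ has a zero entry, and that case genuinely occurs. Take $K=K'\oplus(1)$ with $(\alpha_1,\alpha_2)\in\partial\sD_{K'}\cap\D^2$, which is the shape of $K^{(1,2)}$ in the proof of Theorem~\ref{3PointCharac}. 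Then the kernel vector is $(v_1,v_2,0)$ and the $\{1,2\}$ principal minor of $P_{\underline\alpha}$ vanishes. So the ``$\det\ge0$ plus positive minors'' picture fails there, and your unspecified fallback to a $2\times2$ block leaves this case open.

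\textbf{The fix.} Use the paper's radial (homogeneity) argument: evaluate the functional at $\uw=\underline\alpha$. Summing $\overline{v_i}(P_{\underline\alpha}v)_i=0$ over $i$ gives $\sum_{i,j}\overline{v_i}v_jK_{ij}\alpha_i\overline{\alpha_j}=v^*Kv>0$. Hence the derivative of $\lambda_{\min}(P_{\uw})$ in the direction $\underline\alpha$ is $-2v^*Kv<0$ (with $\|v\|=1$). Two facts then finish the step:
\begin{itemize}
\item $\sDk=\{\uw:\lambda_{\min}(P_{\uw})\ge0\}$ holds globally;
\item $\lambda_{\min}$ is real-analytic near $\underline\alpha$ whenever $0$ is a simple eigenvalue of $P_{\underline\alpha}$.
\end{itemize}
This settles every rank-$2$ point at once, with no case split on the zeros of $v$ and no need to track principal minors.
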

	\begin{proof}
		Since $K$ is a positive definite matrix there exist three linearly independent elements $k_1,k_2,k_3\in \C^3$ such that $K_{i,j}=\langle k_j,k_i\rangle$. For $\uw\in\C^3$, we define $T_{\uw}:\C^3\rightarrow\C^3$ by $\Tw(k_j)=\overline{w_j}k_j$. Then $\sDk=\{\uw\in \C^3:I_3-\Tw^* \Tw\geq \mathbf{0}\}=\{\uw\in \C^3:||\Tw||\leq 1\}$. Since $\underline{\alpha}\in \partial\sDk$, $||T_{\underline{\alpha}}||=1$, and hence $rank(I_3-\Tw^* \Tw)$ is one of $0,1$ and $2$.

		Since $\underline{\alpha}\in \D^3$, the rank is not $0$. Suppose that the rank is $2$. If the eigenvalues of $T_{\underline{\alpha}}^*T_{\underline{\alpha}}$ are $\lambda_1,\lambda_2,\lambda_3$, we may arrange them as $0\leq \lambda_1\leq\lambda_2< \lambda_3=1$ (using the spectral theorem and the fact that our operator is a positive one). Write \begin{align*}
			F(\lambda,\uw)&=det\big(\lambda I_3-\Tw^*\Tw\big)\\
			&=\lambda^3 -a(\uw)\lambda^2-b(\uw)\lambda-c(\uw)
		\end{align*}
		where $a,b$ and $c$ are polynomials in $\uw$ and its conjugate. Note that $F(\lambda,\underline{\alpha})$ has a simple zero at $\lambda=1$ and this implies $\frac{\partial F}{\partial \lambda}(1,\underline{\alpha})\neq 0$. By implicit function theorem, we can find open neighborhoods $U_1$ of $1$ in $\mathbb{R}$ and $U_{\underline{\alpha}}$ of $\underline{\alpha}$ in $\mathbb{C}^3$, and a smooth function $g:U_{\underline{\alpha}}\rightarrow U_1$ such that 
		\begin{align*}
			\{(\lambda,\underline{w})\in U_1\times U_{\underline{\alpha}}:F(\lambda,\uw)=0\}=\{(g(\uw),\uw):\uw\in U_{\underline{\alpha}}\}.
		\end{align*}
		We can take $U_1\times U_{\underline{\alpha}}$ small enough so that $\frac{\partial F}{\partial \lambda}$ does not vanish in this neighborhood of $(1,\underline{\alpha})$. Thus for a fixed $\uw\in U_{\underline{\alpha}}$, $F(.,\uw)$ has exactly one zero in $U_1$, namely $g(\uw)$, and this zero depends smoothly on $\uw$.
		
		\textit{\textbf{Claim.}} We can choose $U_1\times U_{\underline{\alpha}}$ so that given $\uw\in U_{\underline{\alpha}}$, $g(\uw)$ is the largest zero of $F(.,\uw)$.
		
		\textit{\textbf{Proof of the claim.}} If not, then with a choice of $U_1\times U_{\underline{\alpha}}$ such that $\frac{\partial F}{\partial \lambda}$ does not vanish in this neighborhood, we can find a sequence $\{\uw _m\}$ in $U_{\underline{\alpha}}$ converging to $\underline{\alpha}$ and $\{\lambda_m\}$ in $\mathbb{R}$ such that $g(\uw _m)<\lambda_m$ and $F(\lambda_m, \uw_m)=0$. Since $|\lambda_m|$ are dominated by $||T_{\uw_m}||$ and the sequence $\{\uw_m\}$ converges, we may assume that $\{\lambda_m\}$ converges to some $\lambda\in \mathbb{R}$. Thus we get $F(\lambda, \underline{\alpha})=0$ and $\lambda\geq 1$. But this implies $\lambda=1$ and hence the sequence $\{\lambda_m\}$ is eventually in $U_1$. This contradicts the fact that $F(.,\uw_m)$ has exactly one zero in $U_1$ for all large enough $m$.

		Thus for all $\uw$ in a neighborhood $U_{\underline{\alpha}}$ of $\underline{\alpha}$, $g(\uw)=||\Tw^*||=||\Tw||$, and hence we have
		\begin{align}\label{gAndt}
			g(t\uw)=tg(\uw)
		\end{align} for all $t\in \mathbb{R}$ close to $1$. We differentiate (\ref{gAndt}) with with respect to $t$ and obtain 
		\begin{align*}
			1=g(\underline{\alpha})=\sum_{j=1}^{3}\Big\{\frac{\partial g}{\partial w_j} (\underline{\alpha})\alpha_j+\frac{\partial g}{\partial \overline{w_j}} (\underline{\alpha})\overline{\alpha_j}\Big\}.
		\end{align*}
		So the gradient of $g$ does not vanish near $\underline{\alpha}$ and hence $g$ gives a local defining function for $\partial\sDk$ at $\underline{\alpha}$. This implies that $\underline{\alpha}$ is not a singular point and we arrive at a contradiction. 
		
		Thus we have $rank\big(1-T_{\underline{\alpha}}^*T_{\underline{\alpha}}\big)=1$. By Lemma $3.1.$ in \cite{CW1996}, we can find $c_1,c_2,c_3\in \C$ such that $K_{i,j}=\frac{\overline{c_i}c_j}{1-\alpha_i\overline{\alpha_j}}$. Since $K$ is positive definite, we conclude that $\alpha_i$ are mutually distinct and $\sDk$ coincides with $\sD_\D (\underline{\alpha})$.
	\end{proof}
	
\begin{proof}[\textbf{Proof of Theorem \ref{3PointCharac}}]
	Without loss of generality, we may assume that $(\alpha_i,\alpha_j)\in \partial\sDo(z_i,z_j)$ for $(i,j)=(1,2)$ and $(1,3)$. Therefore we have $m(\alpha_1,\alpha_j)=\co(z_1,z_j),j=2,3$.
	
	We define $$K_{2\times 2} ^{(1,2)}=\begin{pmatrix}
		\frac{\sqrt{1-|\alpha_i|^2}\sqrt{1-|\alpha_j|^2}}{1-\alpha_i\overline{\alpha_j}}
	\end{pmatrix}_{1\leq i,j\leq 2}\,\, \text{and}\,\, K^{(1,2)}=\begin{pmatrix}
		K_{2\times 2} ^{(1,2)} &0\\
		0 & 1
	\end{pmatrix}.$$ Then $\sD_{K_{2\times 2} ^{(1,2)}}=\sD_\D (\alpha_1,\alpha_2)=\sDo(z_1,z_2)$ (\cite{Biswas2025}) and $\sD_{K^{(1,2)}}=\sDo(z_1,z_2)\times \overline{\D}$. Let $L^{(1,2)}$ be the supporting hyperplane to $\sDo(z_1,z_2)$ at $(\alpha_1,\alpha_2)$ in $\C^2$. We define $\Sigma^{(1,2)}=\{(w_1,w_2,w_3)\in \C^3:(w_1,w_2)\in L^{(1,2)}, w_3\in \C\}$ which is a supporting hyperplane to $\sD_{K^{(1,2)}}$ at $\underline{\alpha}$ in $\C^3$.
	
	Similarly we define $K_{2\times 2} ^{(1,3)}, K^{(1,3)}, L^{(1,3)}$ and the supporting hyperplane $\Sigma^{(1,3)}$ to $\sD_{K^{(1,3)}}$ at $\underline{\alpha}$ in $\C^3$.
	
	Note that $\underline{\alpha}\in \partial\sD_{K^{(1,2)}}\cap \partial\sD_{K^{(1,3)}}$. It is easy to see that the hyperplanes $\Sigma^{(1,2)}$ and $\Sigma^{(1,3)}$ are distinct (see page $210$ in \cite{CW1996}). Thus we have $\sDo(\uz)\subseteq \sD_{K^{(1,2)}}\cap \sD_{K^{(1,3)}}$ and $\sDo(\uz)$ lies in one of the four quadrants that $\Sigma^{(1,2)}$ and $\Sigma^{(1,3)}$ divides $\C^3$ into. This implies that $\underline{\alpha}$ is a singular point on $\partial\sDo(\uz)$, that is, on $\partial\sDk$. By Proposition \ref{SingularElement}, we conclude that $\alpha_1,\alpha_2$ and $\alpha_3$ are distinct and $\sDo(\uz)=\sD_\D (\underline{\alpha})$.
\end{proof}

Now we describe a few properties of extremal kernels.

\begin{lemma}\label{ExtremalKernel}
	A function $K:\{z_1,\ldots,z_n\}\times \{z_1,\ldots,z_n\}\rightarrow \mathbb{C}$ is  an extremal kernel if and only if $\pi_I(\sD_K (\underline{z}))=\sD_K(\underline{z}_I)$ for every $k\in \{1,\ldots,n\}$ and every $k$-tuple $I$.
\end{lemma}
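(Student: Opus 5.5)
We prove both directions using the description $\sD_K(\underline{z}_I)=\{\underline{w}'\in\C^k:\|T_{\underline{w}'}\|\leq 1\}$, where $T_{\underline{w}'}$ is the diagonal-type operator on $\mathrm{span}\{k_{i_1},\ldots,k_{i_k}\}$ built from the vectors with $K(z_i,z_j)=\langle k_j,k_i\rangle$, exactly as in the proof of Proposition \ref{SingularElement}. We use four basic facts about these sets.
\begin{enumerate}
\item Each $\sD_K(\underline{z}_I)$ is a compact convex set containing $\D^k$. Convexity holds because the operator norm is a norm. The set lies in $\overline{\D}^k$ because its diagonal entries $(1-|w_l|^2)K(z_{i_l},z_{i_l})$ must be nonnegative. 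It contains $\D^k$ because for $|w|<1$ the matrix $((1-w_l\overline{w_m})K)$ is a Schur product of two positive semidefinite matrices.
\item Its interior is exactly $\{\|T_{\underline{w}'}\|<1\}$. This holds since $t\mapsto \|T_{t\underline{w}'}\|=t\|T_{\underline{w}'}\|$ is homogeneous in $t>0$.
\item Projection decreases the norm: $\pi_I(\sD_K(\underline{z}))\subseteq\sD_K(\underline{z}_I)$. This is because a principal submatrix of a positive semidefinite matrix is positive semidefinite.
\item The inclusion in (3) is compatible with interiors: if $\underline{w}\in\mathrm{int}\,\sD_K(\underline{z})$ then $\pi_I(\underline{w})\in\mathrm{int}\,\sD_K(\underline{z}_I)$. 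Indeed $\|T_{t\underline{w}}\|\le1$ for some $t>1$, so $t\pi_I(\underline{w})\in\sD_K(\underline{z}_I)$ and therefore $\|T_{\pi_I(\underline{w})}\|\le 1/t<1$.
\end{enumerate}

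For the direction ($\Leftarrow$), assume $\pi_I(\sD_K(\underline{z}))=\sD_K(\underline{z}_I)$ and let $\underline{w}'\in\partial\sD_K(\underline{z}_I)$. Choose $\underline{w}\in\sD_K(\underline{z})$ with $\pi_I(\underline{w})=\underline{w}'$. If $\underline{w}$ were an interior point, fact (4) would make $\underline{w}'$ interior, which is a contradiction. Hence $\underline{w}\in\partial\sD_K(\underline{z})$, and $K$ is extremal.

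For the direction ($\Rightarrow$), fact (3) already gives the inclusion $\subseteq$, so we must show every $\underline{w}'\in\sD_K(\underline{z}_I)$ lifts. Boundary points lift by the definition of an extremal kernel. For an interior point $\underline{w}'$, take the ray $t\mapsto t\underline{w}'$ for $t\geq 1$. It leaves the compact set $\sD_K(\underline{z}_I)$ at the parameter $t_0=1/\|T_{\underline{w}'}\|\ge1$, and $t_0\underline{w}'$ is a boundary point. The degenerate case $\underline{w}'=0$ is trivial, since $0\in\sD_K(\underline{z})$. By extremality there is $\underline{u}\in\partial\sD_K(\underline{z})$ with $\pi_I(\underline{u})=t_0\underline{w}'$. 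By convexity and the fact that $0\in\sD_K(\underline{z})$, the point $t_0^{-1}\underline{u}$ lies in $\sD_K(\underline{z})$, and $\pi_I(t_0^{-1}\underline{u})=\underline{w}'$.

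The only potentially delicate point is the identification of the interior with the open norm ball, that is, confirming that $\|T_{\underline{w}}\|$ is a genuine norm and hence continuous and homogeneous. This follows from the linear independence of the $k_j$: then $\underline{w}\mapsto T_{\underline{w}}$ is injective and real-linear. With that in place, the rest is formal.
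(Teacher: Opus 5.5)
Your proof is correct and is essentially the paper's argument. Principal submatrices give $\pi_I(\sD_K(\uz))\subseteq\sD_K(\uzi)$. Pushing a point of $\sD_K(\uzi)$ radially to the boundary, lifting it by extremality and scaling back gives the reverse inclusion. The converse rests on the same observation as the paper: an interior point $\uw$, for which $r\uw\in\sD_K(\uz)$ for some $r>1$, cannot project onto a boundary point. Your operator-norm description just makes these scaling facts explicit, along with the case $\uw'=0$ that the paper's proof passes over.

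One correction: the claim in fact (1) that $\D^k\subseteq\sD_K(\uzi)$ is false. The matrix $(1-w_l\overline{w_m})$ is not positive semidefinite in general; for $k=2$ its determinant is $-|w_1-w_2|^2$. For example, $\sD_\D(\lambda_1,\lambda_2)$ omits $(0,1/2)$ when $m(\lambda_1,\lambda_2)<1/2$. You never use this claim, though. All you need is that $0\in\sD_K(\uz)$, which holds because the relevant matrix is $K$ itself, and that $0$ is interior, which follows from the norm description. So simply delete it.
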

\begin{proof}
	Let $K$ be an extremal kernel and consider any $k$-tuple $I=(i_1,\ldots,i_k)$. If $\underline{w}\in \sD_K(\underline{z})$, then there is a $\underline{w}_I\in \pi_I(\sD_K(\underline{z}))$ such that $\pi_I(\underline{w})=\underline{w}_I$. Since $\underline{w}\in \sD_K(\underline{z})$, we have $\big((1-w_i\overline{w_j})K(z_i,z_j)\big)\geq \mathbf{0}$ and this implies $\big((1-w_{i_l}\overline{w_{i_m}})K(z_{i_l},z_{i_m})\big)\geq \mathbf{0}$. Thus $\underline{w}_I\in \sD_K(\underline{z}_I)$ and hence $\pi_I(\sD_K (\underline{z}))\subseteq\sD_K(\underline{z}_I)$. Now for $\underline{x}\in \sD_K(\underline{z}_I)$ we find an $r\geq 1$ such that $r\underline{x}\in \partial \sD_K(\underline{z}_I)$. This gives us a $\underline{y}\in\partial\sD_K(\underline{z})$ such that $\pi_I(y)=r\underline{x}$. Now using the facts that $\pi_I$ is linear and $\frac{1}{r}\underline{y}\in \sD_K(\underline{z})$, we conclude that $x\in \pi_I(\sD_K(\underline{z}))$ and hence $\sD_K(\underline{z}_I)\subseteq \pi_I(\sD_K(\underline{z}))$.
	
	Conversely, let $\pi_I(\sDk(\uz))=\sDk(\uzi)$ for every $k$-tuple $I$. Let $\uw_I\in \partial \sDk(\uzi)$. We find a $\uw\in\sDk(\uz)$ such that $\uw_I=\pi_I(\uw)$. If $\uw$ lies in the interior of $\sDk(\uz)$, there is an $r>1$ such that $r\uw\in\sDk(\uz)$. This implies $\big((1-r^2w_l\overline{w_m})K(z_l,z_m)\big)_{l,m\in I}\geq \mathbf{0}$ which contradicts the assumption that $\uw_I$ is a boundary point. Thus $\uw\in \partial\sDk(\uz)$.
\end{proof}
	\begin{lemma}\label{ExtremalkernelEntry}
	Suppose that $z_1,\ldots,z_n$ are mutually distinct points in the Carath\'eodory hyperbolic domain $\Omega$ and $K$ is an $n$-extremal kernel on $\{z_1,\ldots,z_n\}$ with $K(z_i,z_i)=1$ for all $i$. If $\sDo(\uz)=\sDk(\uz)$, then $|K(z_i,z_j)|=\sqrt{1-\co(z_i,z_j)^2}$ for all $i$ and $j$. Also, $(0,\ldots,0,\mu)\in \sDk(\uz)$ if and only if $|\mu|\leq \co(z_n;z_1,\ldots,z_{n-1})$.
\end{lemma}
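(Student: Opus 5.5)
The plan is to reduce both assertions to two-point and coordinate-slice computations, using Lemma \ref{ExtremalKernel} to pass between $\sDk(\uz)$ and its projections. Since $K$ is extremal, Lemma \ref{ExtremalKernel} gives $\pi_I(\sDk(\uz))=\sDk(\uzi)$ for every $I$. The remark before the definition of extremal kernels gives $\pi_I(\sDo(\uz))=\sDo(\uzi)$. Combining these with the hypothesis $\sDo(\uz)=\sDk(\uz)$, we get $\sDo(\uzi)=\sDk(\uzi)$ for all $I$. In particular, for $I=(i,j)$ the $2\times 2$ kernel body $\sDk(z_i,z_j)$ equals the two-point Pick body $\sDo(z_i,z_j)$.

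\textbf{First assertion.} Fix $i\neq j$ and write $\kappa=K(z_i,z_j)$. Because $K(z_i,z_i)=K(z_j,z_j)=1$, the matrix defining $\sDk(z_i,z_j)$ is $2\times 2$ with diagonal entries $1-|w_i|^2$ and $1-|w_j|^2$. I will intersect both bodies with the slice $w_i=0$.
\begin{enumerate}
\item For the kernel body, positivity at $w_i=0$ means $|w_j|\leq 1$ and $(1-|w_j|^2)\geq |\kappa|^2$. So the slice is exactly $|w_j|\leq\sqrt{1-|\kappa|^2}$.
\item For the Pick body, the slice is $\{f(z_j): f\in\mathcal{O}(\Omega,\overline{\D}),\ f(z_i)=0\}$.
\end{enumerate}
In the Pick-body slice, functions with $|f|=1$ somewhere are unimodular constants, which are excluded by $f(z_i)=0$. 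So the slice is $\{|\mu|\leq \co(z_j;z_i)\}$ by compactness of normal families. Finally, $\co(z_j;z_i)=\co(z_i,z_j)$ by composing with a M\"obius map sending $f(z_i)$ to $0$. Equating the two radii gives $|K(z_i,z_j)|=\sqrt{1-\co(z_i,z_j)^2}$. For $i=j$ the formula is trivial.

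\textbf{Second assertion.} This one follows directly from $\sDk(\uz)=\sDo(\uz)$. The point $(0,\ldots,0,\mu)$ lies in $\sDo(\uz)$ if and only if some $f\in\mathcal{O}(\Omega,\overline{\D})$ satisfies $f(z_1)=\cdots=f(z_{n-1})=0$ and $f(z_n)=\mu$. As before, such $f$ cannot be a unimodular constant, so $f\in\mathcal{O}(\Omega,\D)$ and $|\mu|\leq\co(z_n;z_1,\ldots,z_{n-1})$.

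For the converse, I use a normal-families argument:
\begin{enumerate}
\item Montel's theorem shows that the supremum defining $\co(z_n;z_1,\ldots,z_{n-1})$ is attained by some $f$.
\item Multiplying $f$ by a constant $c$ with $|c|\leq 1$ and $c f(z_n)=\mu$ realizes every $|\mu|$ up to the supremum.
\item If the maximizing $f$ is a unimodular constant, then it must be $0$, since $n\geq 2$ forces $f(z_1)=0$. In that case $\co$ vanishes and only $\mu=0$ is allowed, which is consistent.
\end{enumerate}

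\textbf{Main obstacle.} I expect the only delicate step to be the identification of the slices. This requires care with the closed-disc versus open-disc distinction and with attainment of the suprema, both handled by normal families.
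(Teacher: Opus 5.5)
Your proof is correct. Its first step is exactly the paper's: you combine Lemma \ref{ExtremalKernel}, the identity $\pi_I(\sDo(\uz))=\sDo(\uzi)$ and the hypothesis to get $\sDk(z_i,z_j)=\sDo(z_i,z_j)$.

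You differ in how you extract $|K(z_i,z_j)|$ from this equality. The paper cites Theorem 2.3 of \cite{Biswas2025}, which describes the whole two-point Pick body as $\{(w_1,w_2)\in\D^2: m(w_1,w_2)\leq\co(z_i,z_j)\}$ together with the unimodular diagonal. It compares this with the full description of the $2\times 2$ kernel body given by the determinant condition $|K(z_i,z_j)|\leq\sqrt{1-m(w_1,w_2)^2}$. One inequality comes from containment and the other from a separate contradiction argument.

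You look only at the slice $w_i=0$. There the kernel body is visibly the closed disc of radius $\sqrt{1-|K(z_i,z_j)|^2}$. The Pick body slice is the closed disc of radius $\co(z_j;z_i)=\co(z_i,z_j)$, by the maximum principle, Montel's theorem and M\"obius invariance. This makes your argument self-contained, since no external description of two-point Pick bodies is needed. It also gives both inequalities at once by equating radii, and it shows that the first assertion is just the case $n=2$ of the second. The paper's route is shorter only because it leans on the cited theorem.

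One wording fix in your item 3: a unimodular constant cannot be $0$. What you mean is that the normal limit vanishes at $z_1$, so it is not a unimodular constant. Hence it maps into $\D$ by the maximum principle, and the supremum defining $\co(z_n;z_1,\ldots,z_{n-1})$ is attained.
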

\begin{proof}
	For any pair $(i,j)$, we have, by Lemma \ref{ExtremalKernel}, $$\sDo(z_i,z_j)=\pi_{(i,j)}(\sDo(\uz))=\pi_{(i,j)}(\sDk(\uz))=\sDk(z_i,z_j).$$
	By Theorem $2.3$ in \cite{Biswas2025}, $\sDo(z_i,z_j)=\{(w_i,w_2)\in \D^2:m(w_1,z_2)\leq\co(z_i,z_j)\}\cup\{(e^{i\theta},e^{i\theta}):\theta\in \mathbb{R}\}$. On the other hand it is easy to see that $(w_1,w_2)\in \sDk(z_i,z_j)$ if and only if either $w_1=w_2\in \partial\D$ or $w_1,w_2\in \D$ and $|K(z_i,z_j)|\leq\sqrt{1-m(w_1,w_2)^2}$. Thus we obtain $|K(z_i,z_j)|\leq\sqrt{1-\co(z_i,z_j)^2}$. Since $\co(z_i,z_j)$ is attained, if we assume $|K(z_i,z_j)|<\sqrt{1-\co(z_i,z_j)^2}$, then there is an $r>1$ and $w_1,w_2\in \D$ such that $rw_1,rw_2\in \D$ and
	$$\co(z_i,z_j)=m(w_1,w_2)<m(rw_1,r w_2)<\sqrt{1-|K(z_i,z_j)|^2}.$$
	This implies $(rw_1,rw_2)\in \sDk(z_i,z_j)-\sDo(z_i,z_j)$ which is a contradiction.
	
	The statement involving $\mu$ follows easily from the definition of $\sDo(\uz)$. 
\end{proof}

\begin{proof}[\textbf{Proof of Theorem \ref{BoundOnGeneralizedCara}}]
	Without loss of generality, we may assume that $K(z_i,z_i)=1$ for all $i$. By Lemma \ref{ExtremalkernelEntry}, there is a $\theta\in \mathbb{R}$ such that $K(z_1,z_2)=e^{i\theta}\frac{\sqrt{1-|\alpha_1|^2}\sqrt{1-|\alpha_2|^2}}{1-\alpha_1\overline{\alpha_2}}$. We find a nonzero $(v_1,v_2)\in \C^2$ such that 
	\begin{align}\label{ZeroOf2x2}
		\begin{pmatrix}
			(1-|\alpha_1|^2) & (1-\alpha_1\overline{\alpha_2})K(z_1,z_2)  \\
			(1-\alpha_2 \overline{\alpha_1})K(z_2,z_1)  & (1-|\alpha_2|^2) 
		\end{pmatrix}\begin{pmatrix}
			v_1  \\
			v_2
		\end{pmatrix}=\mathbf{0}.\end{align}
	
	Note that, $v_1v_2\neq 0$. Now, for any $t,v_3\in \mathbb{C}$, we now have that the quantity
	\begin{align*}
		\Bigg\langle\begin{pmatrix}
			(1-|\alpha_1|^2) & (1-\alpha_1\overline{\alpha_2})K(z_1,z_2)& (1-\alpha_1\overline{\alpha_3})K(z_1,z_3)   \\
			(1-\alpha_2 \overline{\alpha_1})K(z_2,z_1)  & (1-|\alpha_2|^2) &(1-\alpha_2\overline{\alpha_3})K(z_2,z_3) \\
			(1-\alpha_3\overline{\alpha_1})K(z_3,z_1) &(1-\alpha_3\overline{\alpha_2})K(z_3,z_2) &(1-|\alpha_3|^2)
		\end{pmatrix}\begin{pmatrix}
			tv_1  \\
			tv_2\\
			v_3
		\end{pmatrix},\begin{pmatrix}
			tv_1  \\
			tv_2\\
			v_3
		\end{pmatrix}\Bigg\rangle
	\end{align*}
	is non-negative. Simplifying it and using (\ref{ZeroOf2x2}) we find
	$$(1-|\alpha_3|^2)|v_3|^2+2Re\bigg(t\overline{v_3}\big((1-\alpha_3\overline{\alpha_1})K(z_3,z_1)v_1 +(1-\alpha_3\overline{\alpha_2})K(z_3,z_2)v_2\big)\bigg)\geq 0$$
	for all $t,v_3\in \C$. Hence we conclude $(1-\alpha_3\overline{\alpha_1})K(z_3,z_1)v_1 +(1-\alpha_3\overline{\alpha_2})K(z_3,z_2)v_2=0$. Again using (\ref{ZeroOf2x2}), we see that there is a nonzero $\mu\in \C$ such that 
	\begin{align*}\label{OtherValuesOfK}
		K(z_1,z_2)=e^{i\theta}M_{1,2},K(z_3,z_1)=\mu M_{3,1},\,\,\text{and}\,\,K(z_3,z_2)=\mu e^{i\theta}M_{3,2}
	\end{align*}
	where $M_{i,j}=\frac{\sqrt{1-|\alpha_i|^2}\sqrt{1-|\alpha_j|^2}}{1-\alpha_i\overline{\alpha_j}}$. Therefore we have
	\begin{align*}
		|\mu|=\frac{\sqrt{1-\co(z_j,z_3)^2}}{\sqrt{1-m(\alpha_j,\alpha_3)^2}}\,\,\,\text{for}\,\,\,\,j=1,2.
	\end{align*}
	Now it is easy to see that $\Bigg(0,0,\sqrt{\frac{\co(z_j,z_3)^2 -m(\alpha_j,\alpha_3)^2}{1-m(\alpha_j,\alpha_3)^2}}\Bigg)\in \sDk(\uz)$ and the result follows from Lemma \ref{ExtremalkernelEntry}.
\end{proof}

\begin{lemma}\label{SzegoExtremal}
	Let $K$ be an $n$-extremal kernel defined on $\{1,\ldots, n\}$ with $K(i,i)=1$ for all $i$ and let $\alpha_1,\ldots,\alpha_n$ be points in $\D$. If $\sDk=\sD_\D (\underline{\alpha})$, then $\alpha_1,\ldots,\alpha_n$ are mutually distinct and there exist $\theta_1,\ldots,\theta_n\in \mathbb{R}$ such that
	
	\begin{align*}
		K(l,m)=e^{i(\theta_l-\theta_m)}\frac{\sqrt{1-|\alpha_l|^2}\sqrt{1-|\alpha_m|^2}}{1-\alpha_l\overline{\alpha_m}}
	\end{align*}
	for all $l$ and $m$.
\end{lemma}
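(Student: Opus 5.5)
We first show that the $\alpha_j$ are mutually distinct. Next we use extremality of $K$ to pin down the moduli $|K(l,m)|$ through two-point Pick bodies. Finally we test membership of the single point $\underline{\alpha}$ to force the phases of $K(l,m)$ into the coboundary form $e^{i(\theta_l-\theta_m)}$. Throughout, write $M_{l,m}=\frac{\sqrt{1-|\alpha_l|^2}\sqrt{1-|\alpha_m|^2}}{1-\alpha_l\overline{\alpha_m}}$.

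\emph{Distinctness.} Suppose $\alpha_l=\alpha_m=a$ for some $l\neq m$. Take $\uw\in\sD_\D(\underline{\alpha})$ and look at the $2\times 2$ principal minor indexed by $(l,m)$ of the Pick matrix. All four kernel entries equal $\frac{1}{1-|a|^2}$, so the determinant is a positive multiple of
\begin{align*}
(1-|w_l|^2)(1-|w_m|^2)-|1-w_l\overline{w_m}|^2=-|w_l-w_m|^2 .
\end{align*}
Hence every point of $\sD_\D(\underline{\alpha})$ satisfies $w_l=w_m$. On the other hand, $K$ is positive definite, so $\big((1-w_i\overline{w_j})K(i,j)\big)$ is a small perturbation of $K$ for small $\uw$. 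Thus $\sDk$ contains a neighbourhood of $\mathbf{0}$ and cannot lie in the hyperplane $\{w_l=w_m\}$. This contradicts $\sDk=\sD_\D(\underline{\alpha})$, so the $\alpha_j$ are mutually distinct.

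\emph{Moduli.} Fix $l<m$. Since $K$ is extremal, Lemma \ref{ExtremalKernel} gives $\pi_{(l,m)}(\sDk)=\sDk(l,m)$. For the disc, $\pi_{(l,m)}(\sD_\D(\underline{\alpha}))=\sD_\D(\alpha_l,\alpha_m)$: the inclusion $\subseteq$ is obvious, and for $\supseteq$ we restrict a two-point solution in $\mathcal{O}(\D,\overline{\D})$ to all $\alpha_j$. Therefore $\sDk(l,m)=\sD_\D(\alpha_l,\alpha_m)$. Now intersect both sides with $\D^2$, exactly as in the proof of Lemma \ref{ExtremalkernelEntry}.
\begin{itemize}
\item The left side consists of the $(w_1,w_2)$ with $m(w_1,w_2)\leq\sqrt{1-|K(l,m)|^2}$.
\item The right side consists of the $(w_1,w_2)$ with $m(w_1,w_2)\leq m(\alpha_l,\alpha_m)$.
\end{itemize}
Comparing these and using the identity $1-m(a,b)^2=\frac{(1-|a|^2)(1-|b|^2)}{|1-a\overline b|^2}$, we get $|K(l,m)|=|M_{l,m}|$. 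So $K(l,m)=u_{l,m}M_{l,m}$ with $|u_{l,m}|=1$ and $u_{l,l}=1$.

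\emph{Phases (the main step).} The identity map shows $\underline{\alpha}\in\sD_\D(\underline{\alpha})=\sDk$, so
\begin{align*}
\big((1-\alpha_l\overline{\alpha_m})K(l,m)\big)_{l,m}=\big(u_{l,m}\sqrt{1-|\alpha_l|^2}\sqrt{1-|\alpha_m|^2}\big)_{l,m}\geq\mathbf{0}.
\end{align*}
Conjugating by the invertible diagonal matrix $\mathrm{diag}\big((1-|\alpha_j|^2)^{-1/2}\big)$, we find that $U=(u_{l,m})$ is positive semidefinite with unit diagonal and unimodular entries. Write $U$ as a Gram matrix, $u_{l,m}=\langle x_m,x_l\rangle$ with $\|x_j\|=1$. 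Then $|\langle x_m,x_l\rangle|=1=\|x_m\|\|x_l\|$, so equality in Cauchy--Schwarz makes all $x_j$ unimodular multiples of $x_1$. Writing $x_j=e^{-i\theta_j}x_1$ gives $u_{l,m}=e^{i(\theta_l-\theta_m)}$, which is the claimed formula for $K(l,m)$.

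I expect the only delicate point to be the moduli step. There one must make sure that the equality of the two-point bodies $\sDk(l,m)=\sD_\D(\alpha_l,\alpha_m)$ really follows from extremality of $K$ together with the projection property of disc Pick bodies, and that comparing the two bodies inside $\D^2$ pins down $|K(l,m)|$ exactly, not merely as an inequality.
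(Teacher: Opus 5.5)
Your proposal is correct and follows the paper's proof step for step. You get distinctness of the $\alpha_j$ from the $2\times 2$ Pick minor (the paper invokes Lemma 3.2 of Cole--Wermer instead), the moduli $|K(l,m)|$ from extremality and the two-point bodies (the paper cites Lemma \ref{ExtremalkernelEntry} with $\Omega=\D$, whose argument you reproduce, and your direct comparison of the two sets is cleaner, since the exact set equality forces equality of the radii), and the phases from $\underline{\alpha}\in\sDk$, where your Gram-matrix/Cauchy--Schwarz argument supplies the detail the paper compresses into ``the result follows from this.''
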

\begin{proof}
	Since $K$ is positive definite, we can find a $(w_1,\ldots,w_n)\in \sDk(1,\ldots,n)$ such that $w_i\neq w_j$ for all $i\neq j$. So 
	
	\begin{align*}
		\begin{pmatrix}
			\frac{1-w_i\overline{w_j}}{1-\alpha_i\overline{\alpha_j}}
		\end{pmatrix}\geq \mathbf{0},
	\end{align*}
	and using Lemma $3.2$ in \cite{CW1996} we find that $\alpha_1,\ldots,\alpha_n$ are mutually distinct.
	
	By Lemma \ref{ExtremalkernelEntry}, $|K(l,m)|=\sqrt{1-m(\alpha_l,\alpha_m)^2}$ for all $l$ and $m$, and hence there exist $\theta_{l,m}\in \mathbb{R}$ such that $K(l,m)=e^{i\theta_{l,m}}\frac{\sqrt{1-|\alpha_l|^2}\sqrt{1-|\alpha_m|^2}}{1-\alpha_l\overline{\alpha_m}}$ with $\theta_{l,l}=0$ for all $l$. Now, because $\underline{\alpha}\in\sDk(1,\ldots,n)$, we have 
	$\Big(\frac{1-\alpha_l\overline{\alpha_m}}{\sqrt{1-|\alpha_l|^2}\sqrt{1-|\alpha_m|^2}}K(l,m)\Big)=\big(e^{i\theta_{l,m}}\big)\geq \mathbf{0}$. Since $\theta_{l,l}=0$ for all $l$, the result follows from this.
\end{proof}

\begin{proof}[\textbf{Proof of Theorem \ref{nPointTheorem}}]
	We apply induction on $n$. For $n=2$, we have $\sDo(z_1,z_2)=\sD_\D (\alpha_1,\alpha_2)$ by Theorem 2.3 in \cite{Biswas2025}. Let the statement be true for $n-1$.
	
	Suppose now we are given an $n$-extremal kernel $K$ on $\{z_1,\ldots,z_n\}$ satisfying $\sDk(\uz)=\sDo(\uz)$ and a point $\underline{\alpha}=(\alpha_1,\ldots,\alpha_n)\in \sDo(\uz)$ such that $m(\alpha_1,\alpha_j)=\co(z_1,z_j)$ for $j=2,\ldots, n$. Since $K$ is extremal, we have $\sDk(z_1,\ldots,z_{n-1})=\sDo(z_1,\ldots,z_{n-1})$, and the point $(\alpha_1,\ldots,\alpha_{n-1})\in \sDo(z_1,\ldots,z_{n-1})$ satisfies $m(\alpha_1,\alpha_j)=\co(z_1,z_j)$ for $j=2,\ldots, n-1$. By induction hypothesis and Lemma \ref{SzegoExtremal}, there are $\theta_1,\ldots,\theta_{n-1}\in \mathbb{R}$ such that 
	\begin{align*}
		K(z_l,z_m)=e^{i(\theta_l -\theta_m)}\frac{\sqrt{1-|\alpha_l|^2}\sqrt{1-|\alpha_m|^2}}{1-\alpha_l\overline{\alpha_m}}
	\end{align*}
	for all $l,m=1\ldots,n-1$. Therefore, we have
	\begin{align*}
		\begin{pmatrix}
			\frac{1-\alpha_i\overline{\alpha_j}}{\sqrt{1-|\alpha_i|^2}\sqrt{1-|\alpha_j|^2}}K(z_i,z_j)
		\end{pmatrix}_{n\times n}=\begin{pmatrix}
			\big(e^{i(\theta_l -\theta_m)}\big)_{n-1 \times n-1} & \big(a_{j,n}\big)_{n-1\times 1}\\
			\big(a_{n,j}\big)_{1\times n-1} &1
		\end{pmatrix}\geq \mathbf{0},
	\end{align*}
	where $a_{j,n}=\frac{1-\alpha_j\overline{\alpha_n}}{\sqrt{1-|\alpha_j|^2}\sqrt{1-|\alpha_n|^2}}K(z_j,z_n)$. Note that $|a_{1,n}|=1$ by Lemma \ref{ExtremalkernelEntry}. We now consider any $l,m\in \{1,\ldots, n-1\}$ with $l<m$ and the $\{l,m,n\}\times \{l,m,n\}$ block, that is,
	\begin{align*}
		A_{l,m}=\begin{pmatrix}
			1& e^{i(\theta_l -\theta_m)} & a_{l,n}\\
			e^{i(\theta_m -\theta_l)} & 1&a_{m,n}\\
			a_{n,l} & a_{n,m}&1
		\end{pmatrix}.
	\end{align*}
	Since $A_{l,m}$ is positive semidefinite, for any $t,v\in \C$
	\begin{align*}
		\Bigg\langle A_{l,m}\begin{pmatrix}
			-t\\
			te^{i(\theta_m -\theta_l)}\\
			v
		\end{pmatrix},\begin{pmatrix}
			-t\\
			te^{i(\theta_m -\theta_l)}\\
			v
		\end{pmatrix}\Bigg\rangle \geq 0.
	\end{align*}
	Hence $|v|^2 +2Re\big(v\overline{t}(-a_{n,l}+e^{i(\theta_m -\theta_l)}a_{n.m})\big)\geq 0$ for all $t,v\in \C$. Thus we find $e^{i\theta_l}a_{n,l}=e^{i\theta_m}a_{n,m}$. Since $l$ and $m$ is arbitrary and $|a_{1,n}|=1$, there is a $\theta_n\in \mathbb{R}$ such that $a_{l,n}=e^{i(\theta_l -\theta_n)}$ for all $l$. This is sufficient to conclude $\sDk(\uz)=\sD_\D (\underline{\alpha})$, that is, $\sDo(\uz)=\sD_\D (\underline{\alpha})$.
\end{proof}

\vspace{0.1in} \noindent\textbf{Acknowledgements:} The work is supported by European Horizon MSCA grant,\\ CZ.02.01.01/00/22 10/0008854.

\end{document}